\newtheorem{thm}{Theorem}[section]
\newtheorem{cor}[thm]{Corollary}
\theoremstyle{definition}
\newtheorem{rem}[thm]{Remark}
\numberwithin{equation}{section}
\newcommand{\N}{\mathbb{N}}
\newcommand{\eps}{\varepsilon}
\begin{document}
\title{Corrigendum to ``Syndetically proximal pairs'' [J. Math. Anal. Appl. 379 (2011)
656--663]}

\author[J. Li]{Jian Li}
\address[J. Li]{Department of Mathematics,
    Shantou University, Shantou, 515063, Guangdong, China
    -- and --
    Guangdong Provincial Key Laboratory of Digital Signal and Image Processing
    Techniques, Shantou University, Shantou, Guangdong 515063, China}
\email{lijian09@mail.ustc.edu.cn}

\author[T.K.S. Moothathu]{T.K. Subrahmonian Moothathu}
\address[T.K.S. Moothathu]{School of Mathematics and Statistics, University of Hyderabad,
Hyderabad 500 046, India.}
\email{tksubru@gmail.com}
\author[P. Oprocha]{Piotr Oprocha}
\address[P. Oprocha]{AGH University of Science and Technology, Faculty of Applied
    Mathematics, al.
    Mickiewicza 30, 30-059 Krak\'ow, Poland
    -- and --
    National Supercomputing Centre IT4Innovations, Division of the University of Ostrava,
    Institute for Research and Applications of Fuzzy Modeling,
    30. dubna 22, 70103 Ostrava,
    Czech Republic}
\email{oprocha@agh.edu.pl}

\begin{abstract}
We give a counterexample to Theorem 9 in
``Syndetically proximal pairs''[J. Math. Anal. Appl. 379 (2011) 656--663].
We also provide sufficient conditions for the conclusion of Theorem 9 to hold.
\end{abstract}

\maketitle
\section{Introduction}

The reader not familiar with the theory of entropy, in particular this theory in the context
of one-dimensional dynamics, is refereed to monographs \cite{BC,SR,ALM}.

One of the celebrated results in the one-dimensional dynamics is that if a continuous
interval map $f\colon [0,1] \to[0,1]$ has positive topological entropy 
then there is a horseshoe in its iteration \cite{Mi1,Mi2}.
An important consequence of this result is that it has a factor map to the full shift.
More specifically, we have the following useful result, see \cite[Theorem 8]{M11} for this version.
Note that the one-sided full shift dynamical system on the alphabet
$\{0,1,2,\dotsc,m-1\}$ is denoted by $(\Sigma_m,\sigma)$.
\begin{thm}\label{thm:semi-conjugate}
Let $f\colon [0,1]\to[0,1]$ be a continuous map with positive topological entropy.
Then there exist $n\in\mathbb{N}$, an $f^n$-invariant closed set $X\subset [0,1]$,
and a continuous surjection $\phi\colon X\to\Sigma_2$ such that
\begin{enumerate}
\item $\phi\circ f^n(x)=\sigma\circ \phi(x)$ for every $x\in X$.
\item $|\phi^{-1}(y)|\leq 2$ for every $ y\in\Sigma_2$.
\item The set $\{y\in\Sigma_2\colon |\phi^{-1}(y)|>1\}$ is at most countable.
\end{enumerate}
\end{thm}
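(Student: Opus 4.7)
The strategy will be to invoke Misiurewicz's horseshoe theorem for interval maps: from positive topological entropy one extracts an iterate $f^n$ admitting a horseshoe, and by passing to subintervals one may arrange disjoint closed intervals $J_0,J_1\subset[0,1]$ together with disjoint closed subintervals $J_{ij}\subset J_i$ (for $i,j\in\{0,1\}$) on which $f^n$ restricts to a monotone homeomorphism onto $J_j$. This Markov-like structure is the engine for the rest of the proof.

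I would then define the basic coding set
$$X_0 = \bigcap_{k\geq 0} f^{-kn}\!\left(\bigcup_{i,j\in\{0,1\}} J_{ij}\right),$$
which is closed and $f^n$-invariant, and let $\phi_0\colon X_0\to\Sigma_2$ be given by $(\phi_0(x))_k=i$ whenever $f^{kn}(x)\in J_i$. Well-definedness uses $J_0\cap J_1=\emptyset$, continuity is clear since each coordinate is locally constant, and the semiconjugacy identity $\phi_0\circ f^n=\sigma\circ\phi_0$ is immediate. Surjectivity of $\phi_0$ follows by the usual nested-interval argument based on the fact that $f^n|_{J_{ij}}$ is onto $J_j$.

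The key structural point is that every fibre $F_y:=\phi_0^{-1}(y)$ is a closed interval: since $f^n\colon J_{y_0y_1}\to J_{y_1}$ is a homeomorphism, one sees inductively that
$$F_y \;=\; J_{y_0y_1}\cap f^{-n}(J_{y_1y_2})\cap f^{-2n}(J_{y_2y_3})\cap\cdots$$
is a nested intersection of closed subintervals of $J_{y_0y_1}$. Moreover $f^n$ restricts to a homeomorphism of $F_y$ onto $F_{\sigma y}$ and therefore sends endpoints to endpoints. I would therefore take
$$X \;=\; X_0\setminus\bigcup_{y:\,F_y\text{ nondegenerate}}\mathrm{int}(F_y),$$
the interior being taken in $[0,1]$. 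The removed set is a union of open subintervals of $[0,1]$, so $X$ is closed in $[0,1]$; the endpoint-to-endpoint property yields $f^n(X)\subset X$; and by construction $\phi:=\phi_0|_X$ is a continuous surjection onto $\Sigma_2$ satisfying $|\phi^{-1}(y)|\in\{1,2\}$ for every $y$, which gives (1) and (2). Property (3) holds because the nondegenerate fibres form a family of pairwise disjoint nondegenerate subintervals of $[0,1]$, hence at most countable.

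The main obstacle is the very first step: upgrading a plain covering-style horseshoe $f^n(I_i)\supset I_0\cup I_1$, which positive entropy delivers via Misiurewicz's theorem, to a Markov-style horseshoe in which both branches on each $J_i$ are monotone onto $J_0$ and $J_1$. This is where one-dimensionality is used essentially; once that refined structure is in hand, the remaining verifications (continuity, surjectivity, fibres being intervals, forward invariance of $X$, and countability of the nondegenerate fibres) are essentially symbolic bookkeeping.
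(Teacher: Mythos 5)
This statement is quoted in the paper as background (it is \cite[Theorem 8]{M11}, with the remark that the ``standard techniques such as in the proof of Theorem 5.17 in \cite{SR}'' supply the argument), so there is no in-paper proof to compare against; I am judging your proposal on its own terms. Your overall architecture --- code by a two-branch horseshoe, show each fibre of the itinerary map is a closed interval, delete the interiors of the nondegenerate fibres, and count them by disjointness --- is the right shape and matches the standard treatment. But the step you defer to the end is not a technicality: it is false in the generality you need. Misiurewicz's theorem produces a \emph{covering} horseshoe, i.e.\ disjoint closed intervals with $f^n(J_i)\supset J_0\cup J_1$, and the classical refinement only yields closed subintervals $J_{ij}\subset J_i$ with $f^n(J_{ij})=J_j$ and $f^n(\partial J_{ij})=\partial J_j$; it does \emph{not} make $f^n|_{J_{ij}}$ monotone. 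Indeed there are continuous interval maps of positive entropy (e.g.\ a tent map perturbed by a small multiple of a Weierstrass-type nowhere-monotone function, which still carries a strict covering horseshoe because strict covering is an open condition) that are monotone on no nondegenerate subinterval, so no choice of $J_{ij}$ can have the property you assume. ``One-dimensionality'' alone does not rescue this.

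The gap propagates to the two places where you actually use monotonicity. First, without it the set $J_{y_0y_1}\cap f^{-n}(J_{y_1y_2})\cap\cdots$ built from \emph{full} preimages need not be an interval (the preimage of a subinterval under a non-monotone surjection of intervals can be disconnected), so your description of $F_y$ as a nested intersection of closed subintervals fails. This particular point can be repaired by instead choosing, for each finite word $w$, a nested closed interval $J_w$ with $f^n(J_{w_1\cdots w_k})=J_{w_2\cdots w_k}$ and setting $F_y=\bigcap_k J_{y_0\cdots y_k}$; then each $F_y$ is a (possibly degenerate) closed interval, the $F_y$ are pairwise disjoint, $f^n(F_y)=F_{\sigma y}$, and countability of the nondegenerate fibres follows as you say. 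But the second use of monotonicity has no such cheap repair: your forward invariance of $X=X_0\setminus\bigcup_y\mathrm{int}(F_y)$ rests on $f^n$ carrying endpoints of $F_y$ to endpoints of $F_{\sigma y}$, and a non-monotone surjection $F_y\to F_{\sigma y}$ can send both endpoints into the interior. So the set of endpoints need not be $f^n$-invariant, and the conclusion ``the remaining verifications are essentially symbolic bookkeeping'' is not justified: producing a closed $f^n$-invariant selection of at most two points from each fibre is precisely the substantive content of the theorem, and your proposal does not contain an argument for it.
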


It is claimed \cite{M11} that the map $\phi$ in Theorem \ref{thm:semi-conjugate}
can be chosen to be a homeomorphism. The following result is Theorem 9 in \cite{M11}.

\begin{thm}\label{thm:conjugate}
Let $f\colon[0,1]\to[0,1]$ be a continuous map with positive topological entropy and let $m\geq 2$.
Then there exist $n\in\mathbb{N}$, an $f^{2^n}$-invariant closed set $X\subset [0,1]$, and
a homeomorphism $\phi\colon X\to\Sigma_m$ such that $\phi\circ f^{2^n}(x)=\sigma\circ\phi(x)$
for every $x\in X$.
\end{thm}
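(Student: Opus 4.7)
The natural strategy is to upgrade the semiconjugacy from Theorem~\ref{thm:semi-conjugate} to a conjugacy by restricting $\phi$ to a carefully chosen subsystem that is conjugate to $\Sigma_m$ and avoids the countable set of non-injective fibers.

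First I would apply Theorem~\ref{thm:semi-conjugate} to $f$ to obtain an integer $n_0$, an $f^{n_0}$-invariant closed set $X_0\subset[0,1]$, and a continuous surjection $\phi_0\colon X_0\to\Sigma_2$ intertwining $f^{n_0}$ with $\sigma$. Set
\[
B=\{y\in\Sigma_2:|\phi_0^{-1}(y)|>1\},
\]
which is at most countable by item~(3). The only obstruction to $\phi_0$ being a homeomorphism lies on the countable set $B$, so it would suffice to restrict the target to a suitable subshift disjoint from $B$.

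Next, I would realize $\Sigma_m$ as a subsystem of a power of $\sigma$ acting on $\Sigma_2$. Pick $k\in\N$ with $2^k\ge m$, choose $m$ distinct length-$k$ blocks $w_1,\dots,w_m\in\{0,1\}^k$, and consider
\[
Y(w_1,\dots,w_m)=\bigl\{w_{i_1}w_{i_2}w_{i_3}\dotsm:(i_j)_{j\in\N}\in\Sigma_m\bigr\}\subset\Sigma_2.
\]
The concatenation map $\psi\colon(\Sigma_m,\sigma)\to(Y(w_1,\dots,w_m),\sigma^k)$ is a topological conjugacy. Setting $X=\phi_0^{-1}(Y(w_1,\dots,w_m))$ and $\phi=\psi^{-1}\circ\phi_0|_X$ gives a semiconjugacy from $(X,f^{n_0k})$ onto $(\Sigma_m,\sigma)$. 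By arranging $n_0$ and $k$ to be powers of two (which one can do by freely passing to iterates inside Theorem~\ref{thm:semi-conjugate}), the iterate takes the prescribed form $f^{2^n}$.

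The main obstacle, and the step I would expect to be delicate, is to make $\phi$ injective, i.e.\ to choose the blocks so that $Y(w_1,\dots,w_m)\cap B=\emptyset$. Naively, $B$ is countable while each candidate $Y$ is an uncountable Cantor set, and there are plenty of block choices, so a ``generic'' choice seems like it should suffice. However, every $Y(w_1,\dots,w_m)$ is itself a subshift of finite type with a dense countable set of periodic points, and in principle $B$ could contain \emph{all} periodic points of $\Sigma_2$; if a single periodic orbit of $Y(w_1,\dots,w_m)$ is forced to lie inside $B$, no relabeling can remove it. Given that the present paper is framed as a corrigendum, I anticipate that this disjointness step is exactly where the original argument of \cite{M11} fails, and that an additional hypothesis on $f$ controlling the location of $B$ (for instance, that preimages above a given countable set of points are singletons) is needed to recover the desired conclusion.
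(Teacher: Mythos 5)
Your proof cannot be completed, and the reason is not a repairable technicality: the statement itself is false. This paper is a corrigendum whose main point (Theorem~\ref{thm:example}) is a counterexample to Theorem~\ref{thm:conjugate}; the paper contains no proof of the statement you were asked to prove. That said, you deserve credit for locating the fatal step yourself. The obstruction you flag at the end --- that every candidate target subshift $Y(w_1,\dots,w_m)$ necessarily contains certain distinguished points (e.g.\ the periodic points $\overline{w_i}$), and nothing prevents the non-injectivity set $B$ from meeting every such subshift --- is exactly the gap in the original argument of \cite{M11}, and it is exactly what the counterexample exploits. One caveat on your framing: showing that $Y\cap B\neq\emptyset$ for every choice of blocks only defeats this particular construction of $\phi$; to refute the theorem one must rule out \emph{every} invariant closed set $X$, which is what the paper actually does.

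Concretely, the paper starts from a flat-topped tent map $g$ whose nonwandering Cantor set $C$ carries a copy of $(\Sigma_2,\sigma)$, selects for each non-trivial mixing sofic subsystem $C_i\subset C$ a transitive point $x_i$ that is two-sidedly accumulated by its own orbit, and performs a Denjoy-type blow-up of the countable, fully invariant set $D$ generated by the orbits and preimages of all the $x_i$. If some $(X,f^m)$ were conjugate to the full shift, its projection $\pi(X)$ would have to be one of the $C_i$, hence would contain $x_i=z_r$, whose fiber is a nondegenerate interval $I_r=[a,b]$; the recurrence of $x_i$ from both sides forces both endpoints $a,b$ into $X$, and $(a,b)$ is an asymptotic pair whose iterates never coincide --- impossible in the one-sided full shift, where asymptotic pairs must eventually merge. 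Your closing suggestion that an extra hypothesis is needed to recover the conclusion is also borne out: the paper's Theorem~\ref{thm:conditions} shows transitivity of $f$ suffices (via a genuine horseshoe with shrinking cylinders), with further sufficient conditions in Corollaries~\ref{cor:conditions1} and~\ref{cor:1}.
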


After checking the proof of Theorem~\ref{thm:conjugate} in \cite{M11} carefully,
we found some gaps in the proof and later realized that we are able to construct a
counterexample to the statement of Theorem~\ref{thm:conjugate}. 
Strictly speaking, we have the following result.

\begin{thm}\label{thm:example}
There exists a surjective continuous map $f\colon [0,1]\to [0,1]$  with positive topological entropy
such that for every $n\in \N$ and every $f^n$-invariant closed set $X\subset [0,1]$
the map $f^n|_X$ is not topologically conjugate to the full shift dynamical system $(\Sigma_2,\sigma)$.
\end{thm}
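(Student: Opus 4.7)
The plan is to exhibit an explicit continuous surjection $f\colon [0,1]\to [0,1]$ with $\htop(f)>0$ violating the conclusion of Theorem~\ref{thm:conjugate} at every iterate, and then verify the violation by contradiction. Suppose, toward a contradiction, that for some $n\in\N$ and $f^n$-invariant closed $X\subset [0,1]$ there is a homeomorphism $\phi\colon X\to\Sigma_2$ with $\phi\circ f^n|_X=\sigma\circ\phi$. Write $X_i:=\phi^{-1}([i])$ for $i\in\{0,1\}$, where $[i]\subset\Sigma_2$ is the $1$-cylinder. Then $X_0,X_1$ are disjoint nonempty clopen subsets of the Cantor set $X=X_0\cup X_1$; each $f^n|_{X_i}\colon X_i\to X$ is a homeomorphism; and $f^n|_X$ is surjective and exactly 2-to-1, with $|\mathrm{Fix}(f^{nk}|_X)|=2^k$ for every $k\ge 1$. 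Since $X_0$ and $X_1$ are disjoint compacta in $[0,1]$, one may take their convex hulls as disjoint closed intervals $I_0,I_1$ with $X_i\subset I_i$; the surjectivity of $f^n\colon X_i\to X$ combined with connectedness of $f^n(I_i)$ forces $f^n(I_i)\supset I_0\cup I_1$. Thus $(I_0,I_1)$ is a proper disjoint horseshoe for $f^n$, and $X$ is contained in the maximal $f^n$-invariant subset of $I_0\cup I_1$.

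\textbf{Construction.} I would next build $f$ for which the above structural scenario is impossible at every iterate. A natural blueprint is to take $f$ piecewise linear, assembled from a countable family of nested horseshoes of rapidly decreasing scale and individual entropy, accumulating at a single distinguished fixed point $p$. Positive topological entropy is ensured by the nested family. The decisive point is then a dichotomy: any proper disjoint horseshoe $(I_0,I_1)$ of any iterate $f^n$ must either (a) have its maximal $f^n$-invariant set contain $p$, which drags in periodic points of many distinct periods from infinitely many sub-horseshoes, violating the rigid count $|\mathrm{Fix}(f^{nk}|_X)|=2^k$ forced by conjugacy with $\Sigma_2$; or (b) lie entirely within a finite union of sub-horseshoes away from $p$, in which case the resulting itinerary map is shown to collapse nontrivial arcs, so $f^n$ is not 2-to-1 on any non-degenerate Cantor subset. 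Either branch rules out a conjugacy with $(\Sigma_2,\sigma)$.

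\textbf{Main obstacle.} The central difficulty is producing, in a single construction, a map $f$ in which every proper disjoint horseshoe of every iterate $f^n$ is ``contaminated'' in a way that blocks the 2-to-1 rigidity required by conjugacy with $\Sigma_2$, while keeping $\htop(f)>0$. Since Misiurewicz's theorem guarantees horseshoes in sufficiently high iterates of any positive-entropy map, avoidance is impossible; one must systematically pollute every horseshoe at every scale. Verifying that this pollution persists for every $n\in\N$ — including new horseshoes that arise only upon iteration — is the delicate heart of the argument, and will require care in choosing the scales, slopes, and return-map structure of the pieces that build $f$.
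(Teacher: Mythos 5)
There is a genuine gap: what you have written is a strategy, not a proof. The map $f$ is never actually constructed --- ``a countable family of nested horseshoes of rapidly decreasing scale accumulating at a fixed point $p$'' specifies no map, and you yourself concede that verifying the key dichotomy ``is the delicate heart of the argument'' still to be done. Moreover, even granting the blueprint, the dichotomy does not close the argument as stated. In branch (a) you only know that $X$ is \emph{contained in} the maximal $f^n$-invariant subset $\Lambda$ of $I_0\cup I_1$; the fact that $\Lambda$ contains $p$ and picks up periodic points of many periods from the sub-horseshoes says nothing about the periodic points of $X$ itself, so no contradiction with $|\mathrm{Fix}(f^{nk}|_X)|=2^k$ follows. (A full shift can perfectly well sit inside a much larger invariant set with extra periodic orbits.) There is also a smaller slip in the rigidity step: the convex hulls of the disjoint clopen pieces $X_0,X_1$ need not be disjoint intervals, since two disjoint Cantor sets on the line can be interleaved; so the ``proper disjoint horseshoe'' $(I_0,I_1)$ is not automatically available.

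For comparison, the paper's proof is entirely different and much more rigid. It starts from the flat-topped tent map $g(x)=\min\{1,3/2-|3x-3/2|\}$, whose interesting dynamics live on a single Cantor set $C$ conjugate to $(\Sigma_2,\sigma)$, enumerates the (countably many) closed $g^{k}$-invariant subsets $C_i\subset C$ that could carry a mixing sofic subshift, picks a suitably recurrent transitive point $x_i$ in each, and performs a Denjoy-type blow-up of the countable invariant set generated by these points, replacing each point by a nondegenerate interval. If some $(X,f^m)$ were conjugate to $(\Sigma_2,\sigma)$, its projection would have to be one of the $C_i$, forcing both endpoints $a,b$ of the blown-up interval over $x_i$ into $X$; these endpoints form an asymptotic pair that is never eventually equal, whereas in the one-sided full shift every asymptotic pair is eventually equal. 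That single rigidity property of $\Sigma_2$ --- rather than a periodic-point count or a 2-to-1 count --- is what kills every candidate $X$ at every iterate, and it is precisely the ingredient your proposal lacks.
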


\begin{rem}
Even though Theorem~\ref{thm:conjugate} turns out to be false,
several results using it remain valid, for examples Theorems 10 and 11 in \cite{M11},
Theorem 6.1 in \cite{LO13}, Theorem 6.7 in \cite{LT14}.
It seems sufficient to use Theorem~\ref{thm:semi-conjugate} instead of Theorem~\ref{thm:conjugate}
and some standard techniques such as in the proof of Theorem 5.17 in \cite{SR}.
\end{rem}

We also give sufficient conditions for the conclusion of Theorem~\ref{thm:conjugate} to hold.
These conditions cover quite a large class of interval maps.

\begin{thm}\label{thm:conditions}
Let $f\colon [0,1]\to [0,1]$ be a continuous map.
If $f$ is transitive, then there exist $n\in \N$ and an $f^n$-invariant closed set $X\subset [0,1]$
such that $f^n|_X$ is topologically conjugate to the shift dynamical system $(\Sigma_2,\sigma)$.
\end{thm}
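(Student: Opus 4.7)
My plan is to reduce, by classical results on transitive interval dynamics, to the case of a topologically mixing continuous interval map, and then construct inside it a disjoint ``horseshoe'' whose itinerary map is a genuine homeomorphism onto $\Sigma_2$.

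The first step is to reduce to the mixing case. A classical dichotomy of Block--Coppel for transitive continuous interval maps (see \cite{BC}) asserts that either $f$ is topologically mixing on $[0,1]$, or there exists $c\in(0,1)$ with $f([0,c])=[c,1]$, $f([c,1])=[0,c]$ and $f^2$ topologically mixing on each of $[0,c]$ and $[c,1]$. So after possibly passing from $f$ to $f^2$ and restricting to an invariant subinterval, it suffices to produce, for a topologically mixing continuous $g\colon I\to I$ on a compact interval $I$, some $N\in\N$ and a closed $g^N$-invariant set $X\subseteq I$ with $g^N|_X$ conjugate to $(\Sigma_2,\sigma)$. A topologically mixing interval map is \emph{locally eventually onto}: for every nonempty open $U\subseteq I$ there is $k$ with $g^k(U)=I$, and since $g$ is also surjective, $g^m(U)=I$ for every $m\geq k$.

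Next I would fix disjoint closed nondegenerate subintervals $J_0,J_1\subset I$ and an $N\in\N$ with $g^N(J_0)=g^N(J_1)=I$, given by local eventual onto. Then, using the elementary covering lemma for continuous interval maps (if $\psi\colon[a,b]\to\mathbb{R}$ is continuous and a closed interval $[c,d]$ lies in $\psi([a,b])$, then $[a,b]$ contains a closed subinterval mapped by $\psi$ onto $[c,d]$), I would inductively build, for each finite binary word $w=w_0w_1\cdots w_{k-1}$, a closed subinterval $K_w\subseteq J_{w_0}$ satisfying $K_w\subseteq K_{w_0\cdots w_{k-2}}$ when $k\geq 2$ and $g^N(K_w)=K_{w_1\cdots w_{k-1}}$ (with the convention $K_\emptyset:=I$). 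An induction on $k$ starting from $J_0\cap J_1=\emptyset$ shows that the level-$k$ cylinders $\{K_w:|w|=k\}$ are pairwise disjoint: a common point of two cylinders that differ only in the last letter would have its $g^N$-image lying in the empty intersection of two disjoint level-$k$ cylinders.

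Finally, setting $X:=\bigcap_{k\geq 1}\bigcup_{|w|=k}K_w$ yields a compact $g^N$-invariant subset of $I$, and the itinerary map $\phi\colon X\to\Sigma_2$ sending $x$ to the unique $\omega$ with $x\in K_{\omega_0\cdots\omega_{k-1}}$ for all $k$ is well-defined and continuous by the level-wise disjointness, surjective by nested compactness, and intertwines $g^N$ with $\sigma$. The crux is \emph{injectivity}: for each $\omega\in\Sigma_2$ the nested intersection $L_\omega:=\bigcap_kK_{\omega_0\cdots\omega_{k-1}}$ must be a singleton. I would establish this by contradiction. A nondegenerate closed interval $L_\omega$ has nonempty interior in $I$, so by local eventual onto there is $M_0$ with $g^m(\mathrm{int}\,L_\omega)=I$ for all $m\geq M_0$; in particular, for $m=iN$ with $i$ sufficiently large, $g^{iN}(L_\omega)=I$, contradicting $g^{iN}(L_\omega)\subseteq J_{\omega_i}\subsetneq I$. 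Hence $\phi$ is a continuous bijection of compact spaces, therefore a homeomorphism, and we obtain the conjugacy with $n=N$ (or $n=2N$ after the first step). The main obstacle is precisely this injectivity: Theorem~\ref{thm:example} shows the statement fails for general positive-entropy maps, so transitivity must enter decisively, and it does so exactly through topological exactness, which rules out a nondegenerate invariant subinterval staying inside a single itinerary cylinder.
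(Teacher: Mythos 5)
Your overall strategy --- reduce to a topologically mixing map via the standard dichotomy for transitive interval maps, build a two-branch horseshoe, code points by nested cylinder intervals, and rule out nondegenerate fibers using a covering property of mixing maps --- is essentially the same as the paper's proof. The one genuine problem is the auxiliary claim you lean on twice, namely that a topologically mixing interval map is locally eventually onto (i.e.\ $g^k(U)=I$ for every nonempty open $U$ and all large $k$). That statement is false in general: for interval maps, topological mixing is equivalent only to the assertion that for every nonempty open $U$ and every $\varepsilon>0$ one has $g^k(U)\supseteq[\varepsilon,1-\varepsilon]$ for all large $k$ (see \cite{SR}); the endpoints can be missed forever. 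For instance, a mixing map with $g(0)=0$ and $g^{-1}(0)=\{0\}$ has the property that no iterated image of an open set avoiding $0$ ever contains $0$, so no iterated image equals $I$. As written, both the choice of $N$ with $g^N(J_0)=g^N(J_1)=I$ and the final contradiction ``$g^{iN}(L_\omega)=I$'' rest on this false lemma.

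The gap is local and repairable, and the repair is exactly what the paper does. For the horseshoe you only need disjoint closed intervals $J_0,J_1$ contained in the open interval $(0,1)$ with $J_0\cup J_1\subseteq g^N(J_0)\cap g^N(J_1)$ (obtainable from the $[\varepsilon,1-\varepsilon]$ form of mixing, or, as in the paper, from positive topological entropy); you should then root the cylinder hierarchy at $J_0,J_1$ rather than at $K_\emptyset=I$. For injectivity, a nondegenerate $L_\omega$ satisfies $g^{iN}(L_\omega)\subseteq J_{\omega_i}\subseteq J_0\cup J_1$ for all $i$, so its images never meet the gap $(a,b)$ between $J_0$ and $J_1$; since $[a,b]\subset(0,1)$, the correct covering property of mixing already forces $g^m(L_\omega)$ to contain a point of $(a,b)$ for all large $m$, yielding the same contradiction without any appeal to exactness. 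With these substitutions your argument is complete and coincides with the paper's.
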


Recall that a point $x\in [0,1]$ is \emph{equicontinuous} if for every $\varepsilon>0$
there exists an open neighborhood $U$ of $x$ such that $diam(f^n(U))<\varepsilon$ for all $n\geq 0$.

\begin{cor}\label{cor:conditions1}
Let $f\colon [0,1]\to [0,1]$ be a continuous map.
If the set of equicontinuity points of $f$ fails to be dense in $[0,1]$,
then there exist $n\in \N$ and an $f^n$-invariant closed set $X\subset [0,1]$
such that $f^n|_X$ is topologically conjugate to the shift dynamical system $(\Sigma_2,\sigma)$.
\end{cor}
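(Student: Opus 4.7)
The strategy is to reduce Corollary \ref{cor:conditions1} to Theorem \ref{thm:conditions}. Specifically, I would extract a non-degenerate closed subinterval $I\subset[0,1]$ and an integer $k\in\N$ with $f^k(I)=I$ and $f^k|_I$ topologically transitive. Applying Theorem \ref{thm:conditions} to $f^k|_I$ (viewed, via an affine homeomorphism, as a continuous transitive self-map of $[0,1]$) would then produce $\ell\in\N$ and a closed $f^{k\ell}$-invariant set $X\subset I$ such that $f^{k\ell}|_X$ is topologically conjugate to $(\Sigma_2,\sigma)$; the integer $n:=k\ell$ would be as required.

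The first step is to turn the hypothesis into uniform sensitivity on an open interval. If the set of equicontinuity points of $f$ is not dense in $[0,1]$, then there exists an open subinterval $J\subset[0,1]$ containing no equicontinuity point of $f$. A compactness argument on $\overline{J}$ then produces some $\varepsilon>0$ such that for every $x\in J$ and every open neighborhood $U$ of $x$ there exists $m\ge 0$ with $\diam f^m(U)\ge\varepsilon$.

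The second and main step is to promote this local sensitivity to a transitive cycle of intervals. Here I would invoke the one-dimensional structure theory presented in Ruette's monograph \cite{SR}: a continuous interval map whose equicontinuity points fail to be dense has positive topological entropy, and the spectral-type decomposition of its non-wandering set yields a cycle of non-degenerate closed intervals $I_1,\dots,I_k$ with pairwise disjoint interiors, satisfying $f(I_j)\subseteq I_{(j\bmod k)+1}$ and $f^k|_{I_1}$ topologically transitive. Setting $I:=I_1$ completes the reduction.

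The principal obstacle is this second step: extracting a transitive cycle of intervals from the sole hypothesis of non-dense equicontinuity. Both auxiliary facts that I am invoking --- the implication from non-dense equicontinuity to positive topological entropy, and the existence of a transitive cycle of intervals under positive entropy --- are non-trivial results particular to one-dimensional dynamics, and would need to be imported carefully from \cite{SR} (or re-established via standard horseshoe/turbulence constructions). Once the transitive iterate on a closed invariant interval is in hand, the rest is a direct application of Theorem \ref{thm:conditions}.
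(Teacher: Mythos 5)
Your overall strategy --- reduce to Theorem~\ref{thm:conditions} by producing a cycle of nondegenerate closed intervals on which some iterate of $f$ is transitive --- is exactly the reduction the paper performs. However, the bridge you propose for the main step is broken. You claim to obtain the transitive cycle from the two facts ``non-dense equicontinuity implies positive entropy'' and ``positive entropy yields a transitive cycle of non-degenerate closed intervals via the spectral decomposition.'' The second of these is false, and this very paper is built around a counterexample: if positive entropy alone gave a transitive cycle $I_1,\dots,I_k$ of nondegenerate intervals, then Theorem~\ref{thm:conditions} applied to $f^k|_{I_1}$ would produce a subsystem conjugate to $(\Sigma_2,\sigma)$ for \emph{every} positive-entropy map, contradicting Theorem~\ref{thm:example}. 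Concretely, for the flattened tent map $g(x)=\min\{1,3/2-|3x-3/2|\}$ (and its Denjoy-type extension $f$) all the entropy is carried on a Cantor set and almost every point falls onto the fixed point $0$, so Blokh's spectral decomposition produces only a nowhere dense basic set, not a cycle of nondegenerate intervals. Passing through positive entropy discards exactly the information you need.

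The correct bridge uses the \emph{stronger} hypothesis directly: the set of sensitivity points has nonempty interior. The paper writes the complement of the equicontinuity points as $\bigcup_{k=1}^\infty S_k$, where $S_k$ is the closed set of points $x$ such that every neighborhood $U$ of $x$ satisfies $\diam(f^n(U))\ge 1/k$ for some $n$; since this union contains a nondegenerate interval, the Baire category theorem gives $\mathrm{int}(S_k)\neq\emptyset$ for some $k$, and Proposition~2.40 of \cite{SR} (whose hypothesis is precisely sensitivity on a set with nonempty interior, not positive entropy) then yields the transitive cycle $L_1,\dots,L_p$, after which $f^p|_{L_1}$ is transitive and Theorem~\ref{thm:conditions} applies. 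Two smaller points: your ``compactness argument'' for uniform sensitivity is really this Baire category argument (compactness alone does not uniformize the sensitivity constants), and it yields the uniform constant only on some subinterval of $J$, not on all of $J$; neither of these is fatal, but the appeal to positive entropy is.
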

\begin{proof}
For $k\in \N$, let $S_k$ be the collection of all $x\in [0,1]$ with the following property:
for every open neighborhood $U$ of $x$,
there is $n\in \N$ such that $diam(f^n(U))\ge 1/k$. Then it may be seen that each $S_k$
is closed (and also $f$-invariant; but this we do not need).
Note that $\bigcup_{k=1}^\infty S_k$ is the complement of the set of equicontinuity points
of $f$ (in other words, $\bigcup_{k=1}^\infty S_k$ is the set of sensitivity points of $f$).
By the hypothesis of the Corollary, $\bigcup_{k=1}^\infty S_k$
contains a nondegenerate interval. Since $S_k$'s are closed, 
we conclude by Baire category theorem that $int(S_k)\ne \emptyset$ for some $k\in \N$. 
Now by Proposition 2.40 of \cite{SR}, there exists a cycle $L_1,\ldots,L_p$ of
closed intervals such that $f$ restricted to $L_1\cup \cdots \cup L_p$ is transitive.
Then $f^p$ restricted to $L_1$ must be transitive.
It is enough to apply Theorem~\ref{thm:conditions} to the restriction of $f^p$ to $L_1$.
\end{proof}

\begin{cor}\label{cor:1}
Let $f\colon [0,1]\to [0,1]$ be a continuous map with positive entropy.
If $f$ has a dense set of periodic points,
then there exist $n\in \N$ and an $f^n$-invariant closed set $X\subset [0,1]$ such
that $f^n|_X$ is topologically conjugate to the shift dynamical system $(\Sigma_2,\sigma)$.
\end{cor}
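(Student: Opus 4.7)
The plan is to deduce Corollary~\ref{cor:1} from the preceding Corollary~\ref{cor:conditions1}: it suffices to show that under positive topological entropy together with dense periodic points, the set of equicontinuity points of $f$ fails to be dense in $[0,1]$. First, positive entropy gives, via Misiurewicz's classical horseshoe theorem, an integer $n \in \N$ and two disjoint non-degenerate closed sub-intervals $I_0, I_1 \subset [0,1]$ with $f^n(I_j) \supseteq I_0 \cup I_1$ for $j = 0, 1$. Set $\delta = \dist(I_0, I_1) > 0$. Choosing monotone sub-branches of $f^n|_{I_j}$, for each finite word $w = w_0 \cdots w_{N-1} \in \{0,1\}^N$ one obtains a nested closed sub-interval $J_w \subseteq I_{w_0}$ such that $f^{kn}(J_w) = I_{w_k}$ for $0 \le k \le N-1$.

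Next I would use density of periodic points to argue that $\diam(J_{w|_N}) \to 0$ along every non-eventually-periodic sequence $w \in \{0,1\}^{\N}$: if the nested intersection $K_w = \bigcap_N J_{w|_N}$ were a non-degenerate sub-interval, every point of $K_w$ would share the non-eventually-periodic $f^n$-itinerary $w$, so $K_w$ could contain no periodic point --- periodic points necessarily have eventually periodic itineraries under $f^n$ --- contradicting the density of periodic points. As a consequence, every $x \in [0,1]$ with defined, non-eventually-periodic $f^n$-itinerary is a sensitivity point of $f$ with constant $\delta$: given a neighborhood $U$ of $x$, pick $N$ large enough that $J_{w|_N} \subseteq U$; then $f^{Nn}(U) \supseteq f^n(I_{w_{N-1}}) \supseteq I_0 \cup I_1$ has diameter at least $\delta$.

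Finally, to satisfy the hypothesis of Corollary~\ref{cor:conditions1} I must ensure that the sensitivity set of $f$ has non-empty interior, not merely that it contains a Cantor-like set. For this, I would invoke density of periodic points once more, together with the classical observation that dense periodic points forbid wandering intervals (any such interval has pairwise disjoint forward iterates and hence cannot contain a periodic point). This prevents any open sub-interval of $[0,1]$ from being ``trapped'' permanently outside the horseshoe's invariant set, so every non-empty open sub-interval of $[0,1]$ contains an open piece of points with defined, non-eventually-periodic $f^n$-itineraries. The sensitivity set therefore has non-empty interior, and Corollary~\ref{cor:conditions1} produces the required $n$ and $X$. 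The principal obstacle is precisely this last propagation step: pushing sensitivity from the Cantor-like core where itineraries are well defined out to a full open sub-interval of $[0,1]$. Density of periodic points is genuinely essential here --- positive entropy alone admits maps whose horseshoe invariant set is nowhere dense and whose equicontinuity points are dense in a complementary basin, so without the extra hypothesis one cannot close the argument.
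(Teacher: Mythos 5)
Your first two steps (the horseshoe from positive entropy, and the observation that dense periodic points force $\bigcap_N J_{w|_N}$ to be a singleton for every non--eventually-periodic itinerary $w$, whence such points are sensitivity points with constant $\delta$) are sound. The genuine gap is exactly where you locate it: the final ``propagation'' step, and it cannot be closed by the wandering-interval observation. First, the absence of wandering intervals does not prevent an open interval from being permanently trapped outside the horseshoe: take $f$ equal to the identity on $[0,1/2]$ and a rescaled full tent map on $[1/2,1]$. This map has dense periodic points and positive entropy, yet no point of $(0,1/2)$ ever enters the horseshoe, and no subinterval of $(0,1/2)$ is wandering (its points are fixed). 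Second, your claim is internally inconsistent with what you just proved: the set of points whose forward $f^n$-orbit stays in $I_0\cup I_1$ is closed, and any open interval it contained would be an interval of points sharing one itinerary, hence contained in a single $K_w$; for non--eventually-periodic $w$ you have shown $K_w$ is a singleton, so there is never ``an open piece of points with defined, non--eventually-periodic itineraries.'' Thus your argument only produces a Cantor-like (typically nowhere dense) set of sensitivity points, which does not verify the hypothesis of Corollary~\ref{cor:conditions1}.

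The missing ingredient is a genuine structure theorem for maps with dense periodic points, and this is precisely what the paper's (two-line) proof uses: since $h(f)>0$, $f^2$ is not the identity, so by Proposition 3.8 of \cite{SR} either $f$ or $f^2$ is transitive on some nondegenerate closed interval $J$, and Theorem~\ref{thm:conditions} applies directly to $f|_J$ or $f^2|_J$. Note that this bypasses Corollary~\ref{cor:conditions1} entirely; even if you insist on your route, the substantive fact you would need to borrow (transitivity on a subinterval, or at least an interval of sensitivity points) is the content of that proposition, not a consequence of ``no wandering intervals.'' I suggest replacing your last paragraph by an appeal to such a decomposition result, or simply adopting the direct reduction to Theorem~\ref{thm:conditions}.
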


\begin{proof}
Since $h(f)>0$, $f^2$ cannot be identity. Hence by Proposition 3.8 of \cite{SR},
either $f$ or $f^2$ must be transitive on a nondegenerate closed interval $J\subset [0,1]$.
Apply Theorem~\ref{thm:conditions} to $f|_J$ or $f^2|J$.
\end{proof}

\section{Proofs of the main results}

\begin{proof}[Proof of Theorem~\ref{thm:example}]
In fact we will construct a map $f\colon [0,2]\to [0,2]$ which after normalization to a map
$\tilde f\colon [0,1]\to [0,1]$ is an example as required.

Start by considering a map $g\colon [0,1]\to [0,1]$ defined by
$g(x)=\min \{1, 3/2-|3x-3/2|\}.$
In other words, $g$ is a tent map with slope $\pm3$ and flattened top.
Note that there is a Cantor set $C\subset [0,1]$ such that $(C,g)$ is conjugated
(by a homeomorphism $\eta\colon C\to \Sigma_2$) with $(\Sigma_2,\sigma)$ and
if $x\in [0,1]\setminus C$ then there is $n\geq 0$ such that $g^n(x)=0$.

Let $\{C_i\}_{i\in A}$ be the family of all closed subsets of $C$ such that $g^{k_i}(C_i)=C_i$
for some ${k_i}>0$ and $(\eta(C_i),\sigma^{k_i})$ is a non-trivial mixing sofic shift
in the higher power block representation of the full shift $(\Sigma_{2^{k_i}},\sigma)$
which is conjugated  with $(\Sigma_2,\sigma^{k_i})$.
Since every sofic shift has a labeled graph presentation (see \cite[\S 3.1]{ISDC}),
the set $A$ is countable.
Note that it may happen that for some $i\neq j$ we have $C_i \subset C_j$ or
even $C_i=C_j$ but $k_i\neq k_j$.

Note that each $(C_i,g^{k_i})$ is non-trivial mixing.
There is a countable sequence $\{x_i\}_{i\in A}$ of points (not necessarily pairwise distinct)
such that $x_i\in C_i$ is a transitive point of  $(C_i,g^{nk_i})$ for all $n\geq 1$.
Since $C_i$ is perfect, for every nonempty open set $U\subset C_i$ there are points $a,b,c\in U$, $a<b<c$
and $c-a<\eps$. Then $V=(a,c)\cap C_i$ is a nonempty open subset of $C_i$ and for every $y\in V$
we have $(y-\eps,y)\cap C_i\neq \emptyset$ and $(y,y+\eps)\cap C_i\neq \emptyset$.
This immediately shows that
the set of points $y\in C_i$ such that for every $\eps>0$ we have
$(y-\eps,y)\cap C_i\neq \emptyset$ and $(y,y+\eps)\cap C_i\neq \emptyset$ is residual.
Therefore we can require that $x_i$ is such that for any $\eps>0$ and $n>0$ there are $s,t>0$
such that $g^{ns}(x_i)\in (x_i-\eps,x_i)$ and $g^{nt}(x_i)\in (x_i,x_i+\eps)$.

We will perform a construction similar to the standard Denjoy extension of irrational
rotation on the unit circle (see e.g. \cite[Proposition 4.4.4]{HK03}).
First observe that by the definition $g^j(x_i)\neq 0$ for any $j\geq 0$ and $i\in A$ and
hence the set
$$
D=\bigcup_{i\in A} \bigcup_{k\geq 0}g^{-k}(\{g^j(x_i)\colon j=0,1,\dotsc\})
$$
is countable, because if $z\neq 1$ then $g^{-1}(z)$ has exactly two elements.
Furthermore $g(D)=D$, $g^{-1}(D)=D$ and $D\subset (0,1/3)\cup (2/3,1)$.
Enumerate elements of $D$, say $D=\{z_j\}_{j\in \N}$.
Extend $[0,1]$ to $[0,2]$ by inserting in place of each $z_j$ an interval $I_j$ of length $2^{-j}$.
This way we have a monotone surjective map $\pi \colon [0,2]\to [0,1]$ which is
one to one for each $x\in [0,2]\setminus \cup_{j} I_j$ and $\pi(I_j)=z_j$ for $j\in\mathbb{N}$.

We will define a map $f\colon [0,2]\to [0,2]$ in the following way.
For $x\not\in \cup_j I_j$ we put $f(x)=\pi^{-1}g(\pi (x))$.
If $x\in I_j$ then $\pi(x)=z_j\in D$ and hence $g'(z_j)=3$ or $g'(z_j)=-3$.
There exists $s\in\mathbb{N}$ such that $g(z_j)=z_s$.
Then we define $f|_{I_j}\colon I_j \to I_s$ as a homeomorphic map of constant slope
which is increasing when  $g'(z_j)=3$ and decreasing in the other case.
Observe that the map $f$ defined that way is continuous and $\pi \circ f=g\circ \pi$.

Since $f$ is an extension of $g$, the topological entropy of $f$ is also positive.
Suppose that there exist $m\in\mathbb{N}$ and an $f^m$-invariant closed set $X\subset [0,2]$
such that $(X,f^m)$ is conjugated to $(\Sigma_2,\sigma)$.
Then $(\pi(X),g^m)$ is mixing and infinite. This implies that $\pi(X)\subset C$ because if
$\pi(X)\setminus C\neq \emptyset$
then there exists an open set $U\subset \pi(X)$ and $k>0$ such that $g^{km}(U)=\{0\}$ which is
impossible.
Hence $(\eta(\pi(X)),\sigma^m)$ is a factor of $(X,f^m)$, and therefore is a sofic shift as a
factor of a shift of finite type.
Therefore there exists an $i\in A$ such that $\pi(X)=C_i$ and $m=k_i$.
There exists $r\in\mathbb{N}$ such that $z_r=x_i\in C_i$.
Let $I_r=[a,b]$. Observe that $f^j(a,b)\cap (a,b)=\emptyset$ for every $j>0$.
In particular $a$ and $b$ are asymptotic, that is $\lim_{j\to\infty}|f^j(a)-f^j(b)|=0$.
As $(X,f^m)$ is mixing and $\pi^{-1}(z_r)=I_r$, $I_r\cap X\subset \{a,b\}$.
Without loss of generality, assume that $a\in X$.
Note that the orbit of $x_i$ under $g^{k_i}$ intersects both intervals
$(x_i-\eps,x_i)$ and $(x_i,x_i+\eps)$ for every $\eps>0$.
Let $\{s_j\}$ be an increasing sequence of positive integers
such that $g^{k_is_j}(x_i)>x_i$ and $\lim_{j\to \infty} g^{k_i s_j}(x_i)=x_i$.
Without loss of generality we may also assume that $\lim_{j\to \infty} f^{k_i s_j}(a)$ exists.
Since $\pi$ is monotone and $\pi\circ f=g\circ \pi$, we get $f^{k_i s_j}(a)>b$ and so
$\lim_{j\to \infty} f^{k_i s_j}(a)=b$. This shows that $a,b\in X$.
But for any asymptotic pair $p,q$ in the one sided full shift,
there exists $n$ such that $\sigma^n(p)=\sigma^n(q)$. This implies $f^{mn}(a)=f^{mn}(b)$
which contradicts the fact that all intervals $I_j$ are nondegenerate.
\end{proof}

\begin{proof}[Proof of Theorem~\ref{thm:conditions}]
First we recall a well known fact that if $f$ is transitive but not mixing,
then there is $0<c<1$
such that $[0,c]$ is $f^2$-invariant and the restriction of $f^2$ to $[0,c]$ is mixing
(see e.g. \cite[Proposition 2.16]{SR}).
Replacing $f$ by $f^2$ if necessary, we assume that $f$ is mixing.
As $f$ has positive topological entropy (see e.g. \cite[Proposition 4.70]{SR}),
there exist $r\in \N$ and disjoint
closed intervals $J_0,J_1$ such that  $g:=f^r$ satisfies $J_0\cup J_1\subset g(J_0)\cap
g(J_1)$. Without loss of generality assume $J_0$ is to the left of $J_1$, and let $(a,b)$ be
the open interval between $J_0$ and $J_1$ (i.e., $a=\max J_0$ and $b=\min J_1$). For later
use, keep in mind that $0<a<b<1$. By the horseshoe property of $J_0$ and $J_1$, for $k\in \N$
and $w=w_1\cdots w_k\in \{0,1\}^k$, we can find closed intervals $J_w$ such that
$J_{w_1\cdots w_k}\subset J_{w_1\cdots w_{k-1}}$ and $g(J_{w_1\cdots w_k})=J_{w_2\cdots w_k}$.
For $\alpha=w_1w_2\cdots \in \Sigma_2$, let $J_\alpha=\bigcap_{k=1}^\infty J_{w_1\cdots w_k}$.
Then $J_\alpha$ is either a singleton or a nondegenerate closed interval.
If possible, let $J_\alpha$ be a nondegenerate closed interval. Since $f$ is mixing and
$0<a<b<1$, there is $n_0\in \N$ such that $(a,b)\subset f^n(J_\alpha)$ for every $n\ge n_0$.
On the other hand, $f^{rn}(J_\alpha)=g^n(J_\alpha)\subset J_0\cup J_1$ for every $n\in \N$.
This is a contradiction, and therefore $J_\alpha$ must be a singleton, say
$J_\alpha=\{x_\alpha\}$. Letting $X=\{x_\alpha:\alpha\in \Sigma_2\}$, we may check that $X$
is a $g$-invariant closed set and $(X,g)$ is topologically conjugate to $(\Sigma_2,\sigma)$
via the conjugacy $x_\alpha\mapsto \alpha$.
\end{proof}

\section*{Acknowledgements}
The authors are grateful to Sylvie Ruette for numerous discussions
on the construction of conjugacy with full shift and proofs in \cite{SR}. 
The authors would like to thank the anonymous referee 
for the careful reading and helpful suggestions.

Research of J. Li was supported in part by NSF of China (grant numbers 11401362 and 11471125).
Research of P. Oprocha was supported by National Science Centre, Poland (NCN), grant no.
2015/17/B/ST1/01259.

\end{document}